\tikzset{> =stealth}
\tikzset{normalHead/.tip={Triangle[open,angle=60:4pt]},}
\tikzset{normalTail/.tip={Triangle[reversed,open,angle=60:4pt]},}
\newcommand{\addQEDstyle}[2]{\AtBeginEnvironment{#1}{\pushQED{\qed}\renewcommand{\qedsymbol}{#2}}\AtEndEnvironment{#1}{\popQED}}
\theoremstyle{plain}
\newtheorem{theorem}{Theorem}[section]
\newtheorem{lemma}[theorem]{Lemma}
\newtheorem{proposition}[theorem]{Proposition}
\theoremstyle{definition}
\newtheorem{definition}[theorem]{Definition}
\theoremstyle{remark}
\newtheorem{remark}[theorem]{Remark}
\renewcommand{\epsilon}{\varepsilon}
\renewcommand{\phi}{\varphi}
\newcommand{\N}{\mathbb{N}}
\newcommand{\inv}{^{-1}}
\DeclareMathOperator{\Gl}{Gl}
\newcommand{\splitext}[6]{%
\tikz[baseline]{
\newdimen{\mylabelwidth}
\newdimen{\skipwidth}
\node[anchor=base] (A) {\hspace*{\dimexpr0.5pt-\pgfkeysvalueof{/pgf/inner xsep}}${#1}$};
\settowidth{\mylabelwidth}{\pgfinterruptpicture {$#2$} \endpgfinterruptpicture}
\pgfmathsetlength{\skipwidth}{max(\mylabelwidth,10pt)}
;\node[right] (B) at ([xshift=\skipwidth+12pt]A.east) {${#3}$};
\settowidth{\mylabelwidth}{\pgfinterruptpicture {$#4$} \endpgfinterruptpicture}
\settowidth{\skipwidth}{\pgfinterruptpicture {$#5$} \endpgfinterruptpicture}
\pgfmathsetlength{\skipwidth}{max(\skipwidth,\mylabelwidth,10pt)}
\node[right] (C) at ([xshift=\skipwidth+12pt]B.east) {${#6}$\hspace*{\dimexpr0.5pt-\pgfkeysvalueof{/pgf/inner xsep}}};
\draw[normalTail->] (A) to node [above] {${#2}$} (B);
\draw[transform canvas={yshift=0.5ex},-normalHead] (B) to node [above] {${#4}$} (C);
\draw[transform canvas={yshift=-0.5ex},->] (C) to node [below] {${#5}$} (B);
}}
\newcommand{\normalext}[5]{%
\tikz[baseline]{
\newdimen{\mylabelwidth}
\newdimen{\skipwidth}
\node[anchor=base] (A) {\hspace*{\dimexpr0.5pt-\pgfkeysvalueof{/pgf/inner xsep}}${#1}$};
\settowidth{\mylabelwidth}{\pgfinterruptpicture {$#2$} \endpgfinterruptpicture}
\pgfmathsetlength{\skipwidth}{max(\mylabelwidth,12pt)}
\node[right] (B) at ([xshift=\skipwidth+10pt]A.east) {${#3}$};
\settowidth{\mylabelwidth}{\pgfinterruptpicture {$#4$} \endpgfinterruptpicture}
\pgfmathsetlength{\skipwidth}{max(\mylabelwidth,10pt)}
\node[right] (C) at ([xshift=\skipwidth+10pt]B.east) {${#5}$\hspace*{\dimexpr0.5pt-\pgfkeysvalueof{/pgf/inner xsep}}};
\draw[normalTail->] (A) to node [above] {${#2}$} (B);
\draw[-normalHead] (B) to node [above] {${#4}$} (C);
}}
\tikzstyle{startstop} = [rectangle, rounded corners, minimum width=3cm, minimum height=1cm,text centered, text width=4cm, draw=black, fill=white]
\tikzstyle{io} = [trapezium, trapezium left angle=70, trapezium right angle=110, minimum width=3cm, minimum height=1cm, text centered, draw=black, fill=blue!30]
\tikzstyle{process} = [rectangle, minimum width=3cm, minimum height=1cm, text centered, text width=3cm, draw=black, fill=orange!30]
\tikzstyle{decision} = [diamond, minimum width=3cm, minimum height=1cm, text centered, draw=black, fill=green!30]
\tikzstyle{arrow} = [thick,->,>=stealth]
\tikzset{dot/.style={circle,draw=black,fill=black,minimum size=1mm,inner sep=0mm}}
    \gdef\node@@on@layer{%
      \setbox\tikz@tempbox=\hbox\bgroup\pgfonlayer{#1}\unhbox\tikz@tempbox\endpgfonlayer\egroup}
\def\node@on@layer{\aftergroup\node@@on@layer}
\title{A Survey of Schreier-Type Extensions of Monoids}
\author[P. F. Faul]{Peter F. Faul}
\address{Department of Pure Mathematics and Statistical Sciences\\ University of Cambridge}
\email{peter@faul.io}
\date{\today}
\subjclass[2010]{20M50, 18G50, 20M18}
\keywords{semigroup, weakly Schreier, short five lemma, monoid cohomology, factorisable monoids}
\begin{document}

\maketitle

\begin{abstract}
We give an overview of a number of Schreier-type extensions of monoids and discuss the relation between them. We begin by discussing the characterisations of split extensions of groups, extensions of groups with abelian kernel and finally non-abelian group extensions. We see how these characterisations may be immediately lifted to Schreier split extensions, special Schreier extensions and Schreier extensions respectively. Finally, we look at weakenings of these Schreier extensions and provide a unified account of their characterisation in terms of relaxed actions.
\end{abstract}


\section{Introduction}
Monoid extension theory, in many ways, has a different character to that of group extension theory. In group extension theory there exists an obvious and well-behaved class of objects under study, the extensions of groups. Things are more difficult in the setting of monoids. Monoid extensions are far less well-behaved than their group cousins and so to make progress one must restrict to a particular subclass of monoid extensions which are sufficiently group-like. There is no canonical subclass and so the landscape is somewhat fractured. Not only are the connections between these subclasses not always well understood but some promising approaches seem to have been missed or forgotten entirely. 

It is my hope to rectify this in some small way. This survey restricts itself to those classes of extensions which are \emph{Schreier-like}, which informally we may think of as those extensions similar in character to the \emph{Schreier extensions of monoids} introduced by Redei in \cite{redei1952verallgemeinerung}. We will see, in fact, that these constitute a not insignificant portion of extensions in the literature. Some notable classes of extensions which do not fall neatly into this category will also be discussed, including Leech's \emph{$\mathcal{H}$-coextensions} \cite{leech1975two} and Grillet's \emph{left coset extensions} \cite{grillet1974left}.

Many papers that are not strictly about monoid extensions will be encountered too. Much of the work on \emph{quasi-decompositions} of monoids \cite{schmidt1977quasi,krishnan1979quasi,kohler1979quasi} may be reinterpreted in terms of Schreier-type extensions (not that this connection was necessarily lost on the authors.)
We will also see an interesting connection to \emph{magnifying elements} which may imply some connection to \emph{factorisations of monoids} \cite{tolo1969factorizable, gutan1997semigroups}.

\begin{remark}
I would like to briefly draw attention to K\"ohler's largely forgotten paper mentioned above \cite{kohler1979quasi}. His characterisation of quasi-decompositions of monoids is identical to my characterisation of weakly Schreier split extensions in \cite{faul2019characterisation}. Indeed, it is not hard to see that quasi-decompositions and weakly Schreier split extensions are precisely the same thing. It was this discovery that made me see the necessity of writing this survey. 
\end{remark}

\subsection{Outline}
We begin with a background on group extensions and discuss the well-known characterisations of split extensions, extensions with abelian kernel and finally non-abelian group extensions, with attention given to the low-dimensional cohomology groups as appropriate. 

We then discuss the relatively recent work on those extensions most directly related to Redei's Schreier extensions. Specifically we will look at the characterisations of Schreier split extension \cite{martins2013semidirect}, special Schreier extensions \cite{martins2016baer} and finally Schreier extensions themselves \cite{redei1952verallgemeinerung}, as before links to the low-dimensional cohomology groups will be discussed.

Finally we look at a number of extensions which constitute a weakening of those above, doing away with a uniqueness condition in each case. We will give the characterisation of weakly Schreier split extensions \cite{faul2019characterisation} and discuss its link to the work on quasi-decompositions \cite{schmidt1977quasi,krishnan1979quasi,kohler1979quasi}. We then discuss the special weakly Schreier extensions and their characterisation in the abelian kernel setting \cite{faul2020baer}. 
Finally we look at weakly Schreier extensions, first studied and characterised in \cite{fleischer1981monoid} (another forgotten paper). In particular, we give a new presentation of this latter characterisation in terms of relaxed actions of monoids, providing a unified account for all three characterisation closely resembling the group case. It is here that the connections to magnifying elements may be found \cite{migliorini1971some}. 
\subsection{The web of extensions}
Below is a visual depiction of the relationships between the various classes of extensions. Extensions below the dashed line are split. Black arrows indicate a proper inclusion, red arrows a proper inclusion when restricted to group kernel (forgetting the splitting). Not depicted in this diagram is that (weakly) Schreier split extensions are included in (weakly) Schreier extensions.

\begin{center}
\vspace{0.5cm}
\scalebox{0.84}{
\begin{tikzpicture}[node distance=3cm]

\node (A) [startstop] {Weakly Schreier extensions};
\node (B) [startstop, below left of=A] {Special weakly Schreier extensions};
\node (C) [startstop, right =1cm of B] {Schreier extensions};
\node (F) [startstop, below =7cm of B] {Weakly Schreier split extensions};
\node (G) [startstop, left =2cm of F] {Quasi-decompositions of monoids};
\node (H) [startstop, below =1.5cm of G] {Krishnan's quasi-resolutions of monoids};
\node (I) [startstop, below =1.5cm of H] {Quasi-decompositions of commutative monoids};

\node (J) [startstop, below =1.5cm of C] {Special Schreier extensions};
\node (K) [startstop, below =7cm of J] {Schreier split extensions};
\node (D) [startstop, left =4cm of J] {Leech's normal extensions};
\node (E) [startstop, below =1.5cm of D] {Fulp and Stepp's central extensions};

\node (M) [startstop, left =1.9cm of B] {$\mathcal{H}$-coextensions};
\node (i1) [above =1.5cm of D] {};
\node (i2) [above =1.1cm of B] {};
\node (i3) [above =1.1cm of C] {};

\draw [arrow, transform canvas={xshift=0.8cm}] (B) -- (i2);
\draw [arrow,transform canvas={xshift=-1.6cm}] (C) -- (i3);
\draw [arrow,transform canvas={xshift=1.5cm}] (D) -- (i1);
\draw [arrow] (E) -- (D);
\draw [red, arrow] (F) -- (B);
\draw [double equal sign distance] (G) -- (F);
\draw [arrow] (H) -- (G);
\draw [arrow] (I) -- (H);
\draw [arrow] (J) -- (C);
\draw [arrow,red] (K) -- (J);
\draw [arrow] (K) -| (F);
\begin{pgfonlayer}{bg}
\draw [arrow,transform canvas={xshift=1.5cm}] (J) -| (B);
\end{pgfonlayer}
\draw [arrow] (D) -| (M);
\draw[dashed] ([yshift=1cm,xshift=-2cm]G.north) -- ([yshift=1cm,xshift=7cm]F.north);

\end{tikzpicture}}
\end{center}

\section{Background}
\subsection{Split extensions of groups}\label{sec:splitgrp}
Let $N$, $G$ and $H$ be groups. A split extension of groups may be defined as follows.
\begin{definition}
The diagram $\splitext{N}{k}{G}{e}{s}{H}$ is a split extension when $k$ is the kernel of $e$, $e$ the cokernel of $k$ and $s$ a splitting of $e$. 
\end{definition}

It is well known that split extensions may be characterised in terms of group actions via the semidirect product construction. This works in the following way. Each element $g \in G$ may be expressed uniquely as $g = k(n)s(h)$ for $n \in N$ and $h \in H$. It is not hard to verify that the above $h = e(g)$. There exists then a bijection of sets $f\colon N \times H \to G$ sending $(n,h)$ to $k(n)s(h)$. The semidirect product associated with $\splitext{N}{k}{G}{e}{s}{H}$ is the set $N \times H$ equipped with a multiplication making $f$ an isomorphism.

Since $e(f(n_1,h_1)f(n_2,h_2)) = h_1h_2$, we have that $(n_1,h_1) \cdot (n_2,h_2) = (x,h_1h_2)$ for some $x \in N$. Thus we are looking for some $x$ satisfying that $k(x)s(h_1)s(h_2) = f(n_1,h_2)f(n_2,h_2) = k(n_1)s(h_1)k(n_2)s(h_2)$. The idea is to commute $s(h_1)$ past $k(n_2)$. This may be achieved by conjugating $k(n_2)$ by $s(h_1)$. Since $s(h)k(n)s(h)\inv$ belongs to the kernel we get an action $\alpha \colon H \times N \to N$ of $H$ on $N$ satisfying that $k\alpha(h_1,n_2) = s(h_1)k(n_2)s(h_1)\inv$. It then follows immediately that the multiplication may be defined as $(n_1,h_1) \cdot (n_2,h_2) = (n_1 \alpha(h_1,n_2),h_1h_2)$. Call this group $N \rtimes_\alpha H$ the semidirect product and observe that it may be completed into an extension $\splitext{N}{k'}{N \rtimes_\alpha H}{e'}{s'}{H}$, where $k'(n) = (n,1)$, $s'(h) = (1,h)$ and $e'(n,h) = h$.
This split extension will be isomorphic to the original.

In fact starting with groups $H$ and $N$ and an arbitrary action $\alpha\colon H \times N \to N$, one may construct the semidirect product extension as above. Actions then constitute a complete characterisation of split extensions of groups. For more on this see \cite{maclane2012homology}.

\subsection{Group extensions with abelian kernel}\label{sec:abgrp}
The above ideas may be extended to provide a characterisation for group extensions $\normalext{N}{k}{G}{e}{H}$ with abelian kernel $N$. 

\begin{definition}
The diagram $\normalext{N}{k}{G}{e}{H}$ is an extension if $k$ is the kernel of $e$ and $e$ is the cokernel of $k$.
\end{definition}

The main idea is that $e$ is necessarily surjective and so there exist set-theoretic splittings $s$ (which we may always choose to preserve the unit). We would like to emulate the construction above, relative to this splitting $s$, but must find a way to deal with the fact that $s(h_1h_2) \ne s(h_1)s(h_2)$ --- a fact which was used implicitly in the above. 

Again it is possible to express each $g \in G$ uniquely as $g = k(n)se(g)$.
We make use of this fact to prove the following useful lemma.

\begin{lemma}\label{lmm:cosetalcondition}
Let $\normalext{N}{k}{G}{e}{H}$ be an extension of groups. Then if $e(g_1) = e(g_2)$, there exists a unique $n \in N$ such that $g_1 = k(n)g_2$. 
\end{lemma}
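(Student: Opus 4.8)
The plan is to exploit the unique decomposition $g = k(n)\,s\,e(g)$ that was recalled just before the lemma statement. Fix $g_1, g_2 \in G$ with $e(g_1) = e(g_2)$, and call this common value $h \in H$. First I would write $g_1 = k(n_1)\,s(h)$ and $g_2 = k(n_2)\,s(h)$ using the decomposition, where $n_1, n_2 \in N$ are uniquely determined (here $s$ is a chosen unit-preserving set-theoretic splitting of $e$). Since $s(h)$ is an element of the group $G$, it is invertible, so from these two equations we get $s(h) = k(n_2)\inv g_2$, hence $g_1 = k(n_1) k(n_2)\inv g_2 = k(n_1 n_2\inv) g_2$. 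This exhibits the required $n = n_1 n_2\inv \in N$ with $g_1 = k(n) g_2$.

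For uniqueness, suppose $g_1 = k(n)g_2 = k(n')g_2$ for $n, n' \in N$. Cancelling $g_2$ on the right (again using that $G$ is a group) gives $k(n) = k(n')$, and since $k$ is a kernel map it is in particular injective (a monomorphism of groups), so $n = n'$. This completes the argument.

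The only point that requires a little care — and the one I would flag as the main (minor) obstacle — is making sure the existence argument does not secretly assume $s$ is a homomorphism; it is not, but the argument above never uses multiplicativity of $s$, only that $s$ is a section of $e$ so that $e(g_i) = h$ forces the ``$H$-component'' of $g_i$ to be $s(h)$ in the decomposition. One should also double-check that the decomposition $g = k(n)\,se(g)$ is genuinely available here: $e$ is surjective (it is a cokernel, hence epi, and epis of groups are surjective), so a set-theoretic section $s$ exists, and $k(n) := g\,se(g)\inv$ lies in $\ker e$ since $e(g\,se(g)\inv) = e(g)\,e(g)\inv = 1$; uniqueness of $n$ follows from injectivity of $k$. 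Beyond these routine verifications the proof is immediate.

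Note that surjectivity of $e$ is not even needed for this particular lemma — only that such a decomposition exists for the two elements in play, which it does since $g_1, g_2$ already witness their own $e$-images — but it is cleanest to simply invoke the decomposition recalled in the text. An alternative phrasing avoids $s$ entirely: from $e(g_1 g_2\inv) = e(g_1)e(g_2)\inv = 1$ we get $g_1 g_2\inv \in \ker e = \operatorname{im} k$, so $g_1 g_2\inv = k(n)$ for a unique $n$ (uniqueness by injectivity of $k$), and then $g_1 = k(n) g_2$. I would likely present this shorter version, as it is the more transparent one and generalises the pattern that recurs throughout the monoid setting.
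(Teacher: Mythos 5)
Your proof is correct and follows essentially the same route as the paper: both write $g_1 = k(n_1)se(g_1)$ and $g_2 = k(n_2)se(g_1)$ via the unique decomposition and conclude $g_1 = k(n_1n_2\inv)g_2$. You additionally spell out the uniqueness step (via injectivity of $k$ and cancellation), which the paper leaves implicit, and your alternative argument via $g_1g_2\inv \in \ker e = \operatorname{im} k$ is a valid, slightly cleaner variant of the same idea.
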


\begin{proof}
    Suppose that $e(g_1) = e(g_2)$. We may thus write $g_1 = k(n_1)se(g_1)$ and $g_2 = k(n_2)se(g_2) = k(n_2)se(g_1)$. We then see that $k(n_1n_2\inv)g_2 = k(n_1)se(g_1) = g_1$.
\end{proof}

Now notice that $e(s(h_1)s(h_2)) = e(s(h_1h_2))$ and hence by \cref{lmm:cosetalcondition} there exists a unique $\chi(h_1,h_2) \in N$ such that $s(h_1)s(h_2) = k\chi(h_1,h_2)s(h_1h_2)$. So while it is not the case that $s(h_1)s(h_2) = s(h_1h_2)$, there is a unique element we may multiply $s(h_1h_2)$ on the left by to give the desired $s(h_1)s(h_2)$. 
The argument now carries through as follows. First we define a candidate action $\alpha$ of $H$ on $N$ by $k\alpha(h,n) = s(h)k(n)s(h)\inv$. The only nontrivial condition to check is that $\alpha(h_1h_2,n) = \alpha(h_1,\alpha(h_2,n))$. Here consider the following calculation making explicit use of the fact that $N$ is abelian.  

\begin{align*}
    k\chi(h_1,h_2)k\alpha(h_1h_2,n)s(h_1h_2)   
    &= k\chi(h_1,h_2)s(h_1h_2)k(n) \\
    &= s(h_1)s(h_2)k(n) \\
    &= s(h_1)k\alpha(h_2,n)s(h_2) \\
    &= k\alpha(h_1,\alpha(h_2,n))s(h_1)s(h_2) \\
    &= k\alpha(h_1,\alpha(h_2,n))k\chi(h_1,h_2)s(h_1h_2) \\
    &= k\chi(h_1,h_2)k\alpha(h_1,\alpha(h_2,n))s(h_1h_2)
\end{align*}

Here the last step uses that $N$ is abelian and since each element is invertible we deduce that $\alpha(h_1h_2,n) = \alpha(h_1,\alpha(h_2,n))$ as required.

This action is an invariant of the extension and in particular does not depend on the choice of splitting $s$. If $s'$ was some other splitting then we would have $es(h) = es'(h)$ and so may apply \cref{lmm:cosetalcondition} to acquire a unique element $t(h) \in N$ satisfying that $s(h) = kt(h)s'(h)$. Now if $\alpha$ is the action associated to $s$ we may simply observe the following.

\begin{align*}
    k\alpha(h,n)s'(h)   &= k\alpha(h,n)kt(h)s(h) \\
                        &= kt(h)k\alpha(h,n)s(h) \\
                        &= kt(h)s(h)k(n) \\
                        &= s'(h)k(n)
\end{align*}

Thus the action associated to $s'$ is indeed the same $\alpha$.

Now we may complete the characterisation. Again we have a bijection $f\colon N \times H \to G$ sending $(n,h)$ to $k(n)s(h)$ and would like to equip $N \times H$ with a multiplication making $f$ into an isomorphism. This is given by 
\[
(n_1,h_1)(n_2,h_2) = (n_1\alpha(h_1,n_2)\chi(h_1,h_2),h_1h_2)
\]

Indeed, we may verify that everything is in order.

\begin{align*}
    f((n_1,h_1)(n_2,h_2))   &= f(n_1\alpha(h_1,n_2)\chi(h_1,h_2),h_1h_2) \\
                            &= k(n_1)k\alpha(h_1,n_2)k\chi(h_1,h_2)s(h_1h_2) \\
                            &= k(n_1)k\alpha(h_1,n_2)s(h_1)s(h_2) \\
                            &= k(n_1)s(h_1)k(n_2)s(h_2) \\
                            &= f(n_1,h_1)f(n_2,h_2)
\end{align*}

The function $\chi$ as defined always satisfies that  
\begin{align}
    &\chi(1,h) = 1 = \chi(h,1) \\
    &\chi(x,y)\chi(xy,z) = \alpha(x,\chi(y,z))\chi(x,yz).
\end{align}

Given an action $\alpha\colon H \times N \to N$, any function $\chi\colon H \times H \to N$ satisfying conditions (1) and (2) we call a factor set relative to $\alpha$. Given the data of an action $\alpha$ and a factor set $\chi$, we can equip $N \times H$ with a multiplication as described above. This may always be extended to an extension $\normalext{N}{k}{N \rtimes_\alpha^\chi H}{e}{H}$ in which $k(n) = (n,1)$ and $e(n,h) = h$.

Let us denote by $\mathcal{Z}^2(H,N,\alpha)$ the set of all factor sets relative to the action $\alpha\colon H \times N \to N$. This may be imbued with an abelian group structure given by the pointwise multiplication of the factor sets in $N$. Notice however that we do not have a bijection between $\mathcal{Z}^2(H,N,\alpha)$ and the set of isomorphism classes of extensions with associated action $\alpha$. This is because each splitting of $e$ has associated to it a different factor set. One may identify these factor sets and the equivalence relation is in fact a congruence whose associated normal subgroup is the set of inner-factor sets. These may be derived as follows.

Let $\normalext{N}{k}{G}{e}{H}$ be an extension with abelian kernel and associated action $\alpha$ and suppose that $s$ and $s'$ are two set-theoretic splittings of $e$ with associated factor sets $\chi$ and $\chi'$ respectively. By \cref{lmm:cosetalcondition} we know that there exists a function $t\colon H \to N$ such that $s(h) = kt(h)s'(h)$. Now we may consider the following derivation.

\begin{align*}
    s(h_1)s(h_2) 
    &= kt(h_1)s'(h_1)kt(h_2)s'(h_2) \\
    &= kt(h_1)k\alpha(h_1,t(h_2))s'(h_1)s'(h_2) \\
    &= kt(h_1)k\alpha(h_1,t(h_2))\chi'(h_1,h_2)s'(h_1h_2) \\
    &= kt(h_1)k\alpha(h_1,t(h_2))\chi'(h_1,h_2)kt(h_1h_2)\inv s(h_1h_2)
\end{align*}
 
Since $N$ is abelian, this final line may be reordered to give \[(kt(h_1)k\alpha(h_1,t(h_2))kt(h_1h_2)\inv) \chi'(h_1,h_2)s(h_1h_2) = s(h_1)s(h_2).\] If we write $\delta t(h_1,h_2) = kt(h_1)k\alpha(h_1,t(h_2))kt(h_1h_2)\inv$ we may show that this is a factor set and moreover we have that $\chi(h_1,h_2) = \delta t(h_1,h_2) \chi'(h_1,h_2)$. Factor sets of the form $\delta t$ for $t\colon H \to N$ a unit preserving map, are called inner factor sets.

We call the resulting quotient $\mathcal{H}^2(H,N,\alpha)$ the \emph{second cohomology group} corresponding to $\alpha$. By definition there is an isomorphism from $\mathcal{H}^2(H,N,\alpha)$ onto the set of isomorphism classes of extensions with associated action $\alpha$ and this induces a multiplication on the set known as the \emph{Baer sum}. 

For more information on this construction, see \cite{maclane2012homology}.

\subsection{Characterising non-abelian group extensions}\label{sec:nonabgrp}
We made use of $N$'s commutativity at various points in the above characterisation, most notably when proving that the associated $\alpha$ was indeed an action. It is in fact possible to do away with this assumption and characterise arbitrary group extensions at the cost of $\alpha$ not being an action. The study of this approach is known as Schreier theory \cite{schreier1926erweiterung}.

Given a group extension $\normalext{N}{k}{G}{e}{H}$ we may always define an action of $G$ on $N$ given by conjugation, as $gk(n)g\inv$ will always lie in the kernel. In the case that $N$ is abelian when we precompose the action with an arbitrary splitting $s$ the result is an action of $H$ on $N$. However, in the general non-abelian case, the resulting function $\alpha\colon H \times N \to N$ may fail to satisfy that $\alpha(h_1h_2,n) = \alpha(h_1,\alpha(h_2,n))$.

If $\chi$ is the factor set associated to the splitting $s$ then note that the following identity holds.

\begin{align*}
    k\chi(h_1,h_2)k\alpha(h_1h_2,n)s(h_1h_2) 
    &= k\chi(h_1,h_2)s(h_1h_2)k(n) \\
    &= s(h_1)s(h_2)k(n) \\
    &= s(h_1)k\alpha(h_2,n)s(h_2) \\
    &= k\alpha(h_1,\alpha(h_2,n))s(h_1)s(h_2) \\
    &= k\alpha(h_1,\alpha(h_2,n))k\chi(h_1,h_2)s(h_1h_2)
\end{align*}

Thus, we may conclude that $\chi(h_1,h_2)\alpha(h_1h_2,n) = \alpha(h_1,\alpha(h_2,n))\chi(h_1,h_2)$. This condition is enough to ensure that the desired multiplication on $N \times H$, that being $(n_1,h_1)(n_2,h_2) = (n_1\alpha(h_1,n_2)\chi(h_1,h_2),h_1h_2)$, would indeed be associative. 

Thus, general group extensions may be constructed by pairs $(\alpha, \chi)$ called \emph{factor systems} satisfying the following conditions.

\begin{enumerate}
    \item $\alpha(1,n) = n$,
    \item $\alpha(h,1) = 1$,
    \item $\alpha(h,n_1n_2) = \alpha(h,n_1)\alpha(h,n_2)$,
    \item $\chi(h_1,h_2)\alpha(h_1h_2,n) = \alpha(h_1,\alpha(h_2,n))\chi(h_1,h_2)$,
    \item $\chi(1,h) = 1 = \chi(h,1)$,
    \item $\chi(x,y)\chi(xy,z) = \alpha(x,\chi(y,z))\chi(x,yz)$.
\end{enumerate}

We call the resulting group $N \rtimes_\alpha^\chi H$ the \emph{crossed product} associated to $(\alpha,\chi)$. It may be viewed as an extension in the obvious way.

To provide a full characterisation we must identify when two pairs give the same extensions. Suppose that $(\alpha,\chi)$ and $(\alpha',\chi')$ give isomorphic extensions. 

Suppose we have the following isomorphism of extensions.

\begin{center}
  \begin{tikzpicture}[node distance=2.0cm, auto]
    \node (A) {$N$};
    \node (B) [right=1.2cm of A] {$N \rtimes_{\alpha}^\chi H$};
    \node (C) [right=1.2cm of B] {$H$};
    \node (D) [below of=A] {$N$};
    \node (E) [below of=B] {$N \rtimes_{\alpha'}^{\chi'} H$};
    \node (F) [below of=C] {$H$};
    \draw[normalTail->] (A) to node {$k$} (B);
    \draw[-normalHead] (B) to node {$e$} (C);
    \draw[normalTail->] (D) to node {$k'$} (E);
    \draw[-normalHead] (E) to node {$e'$} (F);
    \draw[->] (B) to node {$f$} (E);
    \draw[double equal sign distance] (A) to (D);
    \draw[double equal sign distance] (C) to (F);
  \end{tikzpicture}
\end{center}

Since $f$ is a morphism of extensions we have that $f(n,1) = fk(n) = k'(n) = (n,1)$, and also that it preserves the second component. Since each element may be written $(n,h) = (n,1)(1,h)$ we have that $f(n,h) = f(n,1)f(1,h) = (n,1)(\gamma(h),h) = (n\gamma(h),h)$ where $\gamma\colon H \to N$ is a set-theoretic map preserving $1$. 

Thus in general $(\alpha,\chi)$ and $(\alpha',\chi')$ will give isomorphic extensions when there exists a map $\gamma\colon H \to N$ such that when we define $f$ as above the result is a homomorphism. (It is easy to see that $f$ is always a bijection of sets.)

By simply computing and comparing $f((n_1,h_1)(n_2,h_2))$ and $f(n_1,h_1)f(n_2,h_2)$ we see that this will occur whenever for all $n\in N$ and $h \in H$ we have 
\begin{align*}
    n_1\alpha'(h_1,n_2)\chi'(h_1,h_2)\gamma(h_1h_2) = n_1\gamma(h_1)\alpha(h_1,\gamma(h_2)n_2)\chi(h_1,h_2).
\end{align*}

The existence of such a $\gamma$ defines an equivalence relation on the set of factor systems which then gives the complete characterisation of group extensions.

\section{Strict Schreier-type extensions of monoids}

In the group setting conjugation appeared to play a crucial role in each characterisation. This poses some problems for the setting of monoids and indeed not much progress can be made when dealing with completely general monoid extensions. However, it is possible to identify a reasonable class of extensions for which all of the above arguments carry through easily. In this section we study the (strict) Schreier extensions of monoids and some of their specialisations. These were introduced by Redei in \cite{redei1952verallgemeinerung} and are defined as follows.

\begin{definition}
An extension $\normalext{N}{k}{G}{e}{H}$ of monoids is \emph{Schreier} if in each fibre $e\inv(h)$ there exists an element $u_h$, called a \emph{generator}, such that for each element $g \in e\inv(h)$, there exists a unique element $n \in N$ such that $g = k(n)u_h$.
\end{definition}

These are the extensions for which there is a natural bijection of sets $f\colon N \times H \to G$, given by $f(n,h) = k(n)u_h$. Even without conjugation, this is enough to generalise all of the preceding constructions.

From this class we may then consider a notion of split extension as well as a class amenable to generalizing the second cohomology groups. With respect to this latter point it should be mentioned that in \cite{tuen1976nonabelianextensions} Schreier extensions of $H$ by an $H$-module were classified in terms of a second cohomology group. These results were then generalised to semi-modules in \cite{patchkoria1977extensions} and \cite{patchkoria1979schreier}. Our focus in this paper is on extensions of monoids by monoids and so these results will not be discussed further.


\subsection{Characterising Schreier split extensions}
Schreier split extensions are defined as follows.

\begin{definition}
A split extension $\splitext{N}{k}{G}{e}{s}{H}$ is a Schreier split extension when for each $g \in G$ there exists a unique $n \in N$ such that $g = k(n)se(g)$.
\end{definition}

Note that these are just Schreier extensions with a distinguished splitting $s$ which selects the generating elements in each fibre.

This condition is strong enough to characterise them in terms of monoid actions in a manner completely analogously to the group setting. Note that $s(h)k(n) \in e\inv(h)$ and so there exists a unique element $\alpha(h,n) \in N$ such that $s(h)k(n) = k\alpha(h,n)s(h)$. These $\alpha(h,n)$ arrange into an action $\alpha\colon H \times N \to N$ which is the analogue of conjugation in the group setting. Indeed, because it is uniquely defined, when the monoids in questions are groups the associated $\alpha$ will of course correspond to conjugation.

Recall that in the characterisation of split extensions of groups it was never important that $\alpha$ was conjugation. What was important was that $k\alpha(h,n)s(h) = s(h)k(n)$. Hence it is fairly easy to see that the same construction works here and that Schreier split extensions are in one-to-one correspondence with monoid actions of $H$ on $N$. 

Concretely, we may construct the semidirect product $N \rtimes_\alpha H$ from an action $\alpha$ of $H$ on $N$. It's underlying set is $N \times H$ and multiplication is given by
\[
(n_1,h_1)(n_2,h_2) = (n_1\alpha(h_1,n_2),h_1h_2).
\]

Naturally, this forms a part of an extension $\splitext{N}{k}{G}{e}{s}{H}$ with $k(n) = (n,1)$, $e(n,h) = h$ and $s(h) = (1,h)$.
This characterisation may be found in \cite{martins2013semidirect} among a number of other results.

These extensions serve as the motivating example for $\mathcal{S}$-protomodular categories \cite{bourn2015Sprotomodular}. Briefly, protomodular categories \cite{bourn1991normalization} are those categories that `behave' like the category of groups. Many nice properties exist in these categories including the `good behaviour' of split extensions. The category of monoids is not protomodular, in particular split extensions in general are `badly behaved'. However a restricted class of split extensions may be considered and a notion of protomodularity may be considered relative to this class. 

\subsection{Characterising Schreier extensions with abelian group kernel}
Schreier extensions with group kernel are known as \emph{special Schreier extensions} and it is not hard to see that they can be defined equivalently as follows.

\begin{definition}
An extension $\normalext{N}{k}{G}{e}{H}$ is special Schreier if whenever $e(g_1) = e(g_2)$ there exists a unique $n \in N$ such that $g_1 = k(n)g_2$.
\end{definition}

Notice that a Schreier split extension will also give rise to special Schreier precisely when it has group kernel.

We see that this definition enforces that \cref{lmm:cosetalcondition} will hold for this class of extensions. Since \cref{lmm:cosetalcondition} and the commutativity of $N$ were the only tools used in the characterisation of group extensions with abelian kernel it makes sense that abelian special Schreier extensions may be characterised just as in the group case.

Indeed, if we choose an arbitrary set-theoretic splitting of $e$ preserving $1$, we may use the special Schreier property to define a unique factor set $\chi\colon H \times H \to N$. Moreover an action $\alpha$ may be defined via the unique elements $\alpha(h,n)$ such that $k\alpha(h,n)s(h) = s(h)k(n)$.

Concretely, when given an action $\alpha$ of $H$ on $N$ and a factor set $\chi$ the crossed product $\N \rtimes_\alpha^\chi H$ may be defined as the set $N \times H$ with multiplication given by 
\[
(n_1,h_1)(n_2,h_2) = (n_1\alpha(h_1,n_2)\chi(h_1,h_2),h_1h_2).
\]
We may then form the extension $\normalext{N}{k}{G}{e}{H}$ in which $k(n) = (n,1)$ and $e(n,h) = h$.

Adapting the arguments in the group case, we may again arrange all the factor sets relative to an action $\alpha$ into an abelian group $\mathcal{Z}^2(H,N,\alpha)$ and may quotient it by the inner factor sets to arrive at the second cohomology group $\mathcal{H}^2(H,N,\alpha)$. This not only provides a full characterisation of special Schreier extensions of monoids with abelian kernel, it supplies them with a Baer sum. 

This characterisation was first done in \cite{martins2016baer} among a number of other results regarding special Schreier extensions in general.

\subsection{Characterising Schreier extensions}
The argument in \cref{sec:nonabgrp} has been setup so that it carries through immediately to the Schreier case. Of course, all Schreier split extensions and special Schreier extensions are Schreier.

Thus we see that Schreier extensions of monoids are characterised by factor systems $(\alpha,\chi)$ where $\alpha\colon H \times N \to N$ and $\chi\colon H \times H \to N$ satisfy the following conditions.

\begin{enumerate}
    \item $\alpha(1,n) = n$,
    \item $\alpha(h,1) = 1$,
    \item $\alpha(h,n_1n_2) = \alpha(h,n_1)\alpha(h,n_2)$,
    \item $\chi(h_1,h_2)\alpha(h_1h_2,n) = \alpha(h_1,\alpha(h_2,n))\chi(h_1,h_2)$,
    \item $\chi(1,h) = 1 = \chi(h,1)$,
    \item $\chi(x,y)\chi(xy,z) = \alpha(x,\chi(y,z))\chi(x,yz)$.
\end{enumerate}

There is a similar equivalence relation that can be placed on the set of factor systems, but here we must specifically require $\gamma$ to be invertible. 

This characterisation may be found in \cite{redei1952verallgemeinerung}. A discussion of the related notion of abstract kernels in a similar context may be found in \cite{martins2020classification}.

\section{Weakly Schreier extensions}
While Schreier extensions are the correct context to emulate the group theoretic arguments in the monoid setting, there are many natural examples of monoid extensions which are not Schreier. 

For instance in \cite{leech1982extending}, Leech considers extensions $\normalext{N}{k}{G}{e}{H}$ in which $N$ is a group and $gN = Ng$ for all $g \in G$. Suppose $\mathbb{Z}_\infty = \mathbb{Z}\cup \{\infty\}$ with $n + \infty = \infty = \infty + n$ and $2 = \{\top,\bot\}$ under meet. The extension $\normalext{\mathbb{Z}}{k}{\mathbb{Z}_\infty}{e}{2}$, with $k(n) = n$, $e(n) = \top$, $e(\infty) = \bot$ constitutes a normal Leech extension which is not Schreier. The failure is due to the lack of a (strict) generator in $e\inv(\bot)$. The only choice is $\infty$ but we see that $n + \infty = \infty = n' + \infty$ for all $n,n' \in \mathbb{Z}$. It can be shown that these are examples of $\mathcal{H}$-coextensions though this will be again briefly touched on later.

Another example is given by Artin glueings of frames \cite{sga4vol1, wraith1974glueing}. Frames are algebraic structures which behave like the lattice of open sets for a topological space. Artin glueings of frames $H$ and $N$ are specified by finite-meet-preserving maps $f\colon H \to N$, where the Artin glueing $\Gl(f) = \{(n,h)\in N \times H: n \le f(h)\}$ has point wise meets and joins. As was shown in \cite{faul2019artin}, the diagram $\splitext{N}{k}{\Gl(f)}{e}{e_*}{H}$ with $k(n) = (n,1)$, $e(n,h) = h$ and $e_*(h) = (f(h),h)$ is a split extension of monoids (where meet is taken as the monoid multiplication). It is the case that $(n,h) = k(n) \wedge e_*(h)$, but in general this $n$ will almost never be unique.

There are a number of other examples of non-Schreier extensions, like Billhardt's $\lambda$-semidirect products \cite{billhardt1992wreath} of inverse monoids (as shown in \cite{faul2020lambda}) and Fulp and Stepp's monoid extensions \cite{fulp1971structure,stepp1971semigroup} (which are the central extensions of Leech's normal extensions of monoids \cite{leech1982extending}).  

What these examples all share in common (excluding our brief mention of $\mathcal{H}$-coextensions) is that in each fibre $e\inv(h)$ there exists a `weak' generator $u_h$. We define this formally below with the notion of a weakly Schreier extension.

\begin{definition}
An extension $\normalext{N}{k}{G}{e}{H}$ is weakly Schreier if in each fibre $e\inv(h)$ there exists an element $u_h$, called a (weak) generator such that for each element $g\in e\inv(h)$ there exists a (not necessarily unique) $n \in N$ such that $g = k(n)u_h$. 
\end{definition}

These were first considered in a relatively unknown paper of Fleischer \cite{fleischer1981monoid}. These extensions and their corresponding notions of weakly Schreier split extensions and special weakly Schreier extensions may characterised in a manner reminiscent of the group and Schreier case. 

\subsection{Weakly Schreier split extensions}
We begin with weakly Schreier split extensions.

\begin{definition}
A split extension $\splitext{N}{k}{G}{e}{s}{H}$ is weakly Schreier when for each $g \in G$ there exists a (not necessarily unique) $n \in N$ such that $g = k(n)se(g)$.
\end{definition}

Immediately we see that we will not get the usual bijection $f \colon N \times H \to G$ sending $(n,h)$ to $k(n)s(h)$. Instead this map is only guaranteed to be surjective. Thus instead, we may quotient $N \times H$ and then consider the induced bijection $\overline{f}\colon N \times H/{\sim} \to G$ where $(n_1,h_1) \sim (n_2,h_2)$ if and only if $k(n_1)s(h_1) = k(n_2)s(h_2)$ and where $f([n,h]) = k(n)s(h)$.

This equivalence relation will always satisfy the following four conditions.

\begin{enumerate}\setcounter{enumi}{-1}
    \item $(n,h) \sim (n',h')$ implies $h = h'$,
    \item $(n,1) \sim (n',1)$ implies $n = n'$,
    \item $(n,h) \sim (n',h)$ implies that $(xn,h) \sim (xn',h)$ for all $x \in N$ and
    \item $(n,h) \sim (n',h)$ implies that $(n,hx) \sim (n',hx)$ for all $x \in H$.
\end{enumerate}


Condition 0 implies that this equivalence relation on $N \times H$ may instead be viewed as an $H$-indexed equivalence relation on $N$. If we equip $H$ with the divisibility preorder, then condition 3 says that the assignment of equivalence relations to elements of $H$ is order preserving, condition 1 says that the assignment preserves bottom and condition 2 says that the equivalence relations are always right congruences.

We obtain the following definition.

\begin{definition}
We call an $H$-indexed equivalence relation on $N$ an \emph{$H$-relaxation} of $N$ if the following conditions hold.

\begin{enumerate}
    \item $n_1 \sim^1 n_2$ implies $n_1 = n_2$,
    \item $n_1 \sim^h n_2$ implies $xn_1 \sim^h xn_2$,
    \item $n_1 \sim^{h_1} n_2$ implies $n_1 \sim^{h_1h_2} n_2$.
\end{enumerate}
\end{definition}

From this new perspective we see that $G$ is in bijection with the disjoint union $\bigsqcup_{h\in H}N/{\sim^h}$.

The multiplication may be characterised by something very similar to an action. As with Schreier split extensions we consider a function $\alpha \colon H \times N \to N$ satisfying that $k\alpha(h,n)s(h) = s(h)k(n)$. In this setting this $\alpha$ is not necessarily unique, however any other $\alpha'$ may be related to $\alpha$ via $\alpha(h,n) \sim^h \alpha'(h,n)$, for all $h \in H$ and $n \in N$.

It can be shown that such maps $\alpha$ always satisfy six conditions which we now collect into a definition.

\begin{definition}
Let $E$ be an $H$-relaxation of $N$, a function $\alpha\colon H \times N \to N$ is a compatible action if the following conditions hold.
\begin{enumerate}
    \item $n_1 \sim^h n_2$ implies $n_1\alpha(h,n) \sim^h n_2\alpha(h,n)$ for all $n \in N$,
    \item $n_1 \sim^{h_2} n_2$ implies $\alpha(h_1,n_1) \sim^{h_1h_2} \alpha(h_1,n_2)$ for all $h_1 \in H$,
    \item $\alpha(h,n_1n_2) \sim^h \alpha(h,n_1)\cdot\alpha(h,n_2)$,
    \item $\alpha(h_1h_2,n) \sim^{h_1h_2} \alpha(h_1,\alpha(h_2,n))$,
    \item $\alpha(h,1) \sim^h 1$,
    \item $\alpha(1,n) \sim^1 n$.
\end{enumerate}
We quotient the set of compatible actions via $\alpha \sim \alpha'$ if and only if $\alpha(h,n) \sim^h \alpha'(h,n)$ for all $h \in H$ and $n \in N$.

We call the pair $(E,[\alpha])$ a \emph{relaxed action} of $H$ on $N$.
\end{definition}

In particular notice that conditions 3-6 are the usual conditions of an action, but which now only hold up to $H$-equivalence. 

Given a compatible action $\alpha$ we may now equip $\bigsqcup_{h\in H}N/{\sim^h}$ with a multiplication making it isomorphic to $G$. This is given by 
\[([n_1],h_1)([n_2],h_2) = ([n_1\alpha(h_1,n_2)],h_1h_2).
\] 
It is not hard to see that for any other $\alpha' \in [\alpha]$, we get the same multiplication.

Call the resulting monoid $N \rtimes_{E,\alpha} H$ the \emph{relaxed semidirect product} associated to the relaxed action $(E,\alpha)$. It may naturally be considered a part of a weakly Schreier split extension $\splitext{N}{k'}{N \rtimes_{E,\alpha} H}{e'}{s'}{H}$ where $k(n) = ([n],1)$, $e([n],h) = h$ and $s(h) = ([1],h)$. This extension will be isomorphic to $\splitext{N}{k}{G}{e}{s}{H}$.

In fact, one may start with an arbitrary relaxed action and construct the associated weakly Schreier split extension. These constitute a full characterisation of weakly Schreier split extensions.

This characterisation was first provided in \cite{kohler1979quasi} in the context of characterising quasi-decompositions of monoids. This result was an extension of the work in \cite{schmidt1977quasi} in which the commutative case was considered. It also generalised Krishnan's quasi-resolution found in \cite{krishnan1979quasi}.

This work was unknown to the author who rediscovered this characterisation in \cite{faul2019characterisation}. The two papers have different focuses with the former being concerned primarily with structural results and the latter more concerned with the extension perspective. 

From the perspective of extensions it is worth mentioning that the split short five lemma does not hold for weakly Schreier split extensions. The full characterisation of the morphisms between weakly Schreier split extensions yields a preorder category. In particular this implies that the set of relaxed actions of $H$ on $N$ have a non-trivial order structure.

\subsection{Weakly Schreier extensions with abelian group kernel}
Special weakly Schreier extensions are those weakly Schreier extensions which have group kernel. An equivalent definition may be given as follows.

\begin{definition}
An extension $\normalext{N}{k}{G}{e}{H}$ is special weakly Schreier (or cosetal) if whenever $e(g_1) = e(g_2)$ then there exists a (not necessarily unique) $n \in N$ such that $g_1 = k(n)g_2$.
\end{definition}

When $N$ is abelian we may provide a simple characterisation in terms of factor sets completely analogous to the group case.

First, to each such extension $\normalext{N}{k}{G}{e}{H}$ we may associate a unique relaxed action of $H$ on $N$. Take any set-theoretic splitting $s$ of $e$ that preserves the unit and define an $H$-indexed equivalence relation $E_s$ given by
\[
n_1 \sim^h n_2 \iff k(n_1)s(h) = k(n_2)s(h). 
\]

It is not too hard to see that all of the conditions of an $H$-relaxation of $N$ are met. If $s'$ is some other splitting notice that $es(h) = es'(h)$ and hence by assumption there exists an element $t(h) \in N$ such that $s(h) = t(h)s'(h)$. Thus if $k(n_1)s'(h) = k(n_2)s'(h)$, multiplying both sides by $kt(h)$ yields $k(n_1)s(h) = k(n_2)s(h)$. Using a symmetric argument we may then conclude that $E_s = E_{s'}$. Hence we may safely denote this relaxation by $E$.

To find the compatible action observe that $e(s(h)) = e(s(h)k(n))$ and hence there exists an element $\alpha(h,n) \in N$ such that $k\alpha(h,n)s(h) = s(h)k(n)$. It is easily shown that $\alpha$ satisfies the axioms of a compatible action and moreover does not depend on the choice of splitting $s$.

Hence we may partition the set of isomorphism classes of special weakly Schreier extensions according to their associated relaxed action. We may then characterise these extensions with a relaxed notion of a factor set. 

Given a relaxed action $(E,[\alpha])$ a relaxed factor set is a function $g\colon H \times H \to N$ satisfying that

\begin{enumerate}
    \item $\chi(1,h) \sim^h 1 \sim^h \chi(h,1)$,
    \item $\chi(x,y)\chi(xy,z) \sim^{xyz} \alpha(x,\chi(y,z))\chi(x,yz)$.
\end{enumerate}

Given such data we may construct the relaxed crossed product $N \rtimes_{E,\alpha}^{\chi} H$ with underlying set $\bigsqcup_{h \in H}N/{\sim^h}$ and multiplication given by 
\[
([n_1],h_2)([n_2],h_2) = ([n_1\alpha(h_1,n_2)\chi(h_1,h_2)],h_1h_2).
\]

This may be completed into an extension $\normalext{N}{k}{N \rtimes_{E,\alpha}^\chi H}{e}{H}$ with $k(n) = ([n],1)$ and $e([n],h) = h$. 
Relaxed factor sets give the same multiplication whenever $\chi(h_1,h_2) \sim^{h_1h_2} \chi'(h_1,h_2)$ for all $h_1,h_2 \in H$. We may indentify such relaxed factor sets.
As before we may consider $Z_2(H,N,E,[\alpha])$ of all (equivalence classes of) factor sets associated to $(E,[\alpha])$. This set has a natural abelian group structure. Many factor sets give the same extensions and so a full characterisation is only provided after quotienting by an appropriate notion of inner factor set. These factor sets are defined as in the group and strict Schreier case. 

The result is $\mathcal{H}^2(H,N,E,[\alpha])$, the second cohomology group. It imbues a Baer sum on extensions with the same action.

This characterisation may be found in full (under the name cosetal extensions) in \cite{faul2020baer}. These ideas were then further extended in \cite{faul2021quotients} which accounted for functorial aspect arising from the non-trivial order structure on relaxed actions.

It is easily verified that Leech's normal extensions are all special weakly Schreier. The converse is not true. If we consider the action $\alpha\colon 2 \times \mathbb{Z} \to \mathbb{Z}$ such that $\alpha(\top,n) = n$ and $\alpha(\bot,n) = 0$ for all $n \in \mathbb{Z}$. The associated Schreier split extension is special Schreier (and consequently special weakly Schreier), but it is easy to verify that $(0,\bot)\mathbb{Z} \ne \mathbb{Z}(0,\bot)$.

\subsection{Weakly Schreier extensions}
As mentioned before, general weakly Schreier extensions were first considered by Fleischer in \cite{fleischer1981monoid}, in which he provided a full characterisation. While his proof makes much use of concepts very related to the notion of relaxed actions they are never treated as a concept themselves. In this section we will give a presentation of his characterisation completely analogous to the group case only with relaxed actions replacing classical actions.

In the group setting we begin with an extension $\normalext{N}{k}{G}{e}{H}$ and induce an action of $G$ on $N$. We then compose this action with a set-theoretic splitting $s$ of $e$ which selects only generators and then find the related factor sets.

If $\normalext{N}{k}{G}{e}{H}$ is a weakly Schreier extension of monoids we will not in general be able to induce a relaxed action of $G$ on $N$. However, there will exist a relaxed action of the \emph{right-normaliser} of $N$ on $N$.

\begin{definition}
Let $N$ be a submonoid of $G$. Then the right-normaliser $\mathcal{N}_G(N)$ is the submonoid given by $\{g \in G : gN \subseteq Ng\}$.
\end{definition}

Notice that this is the set of magnifying elements of $N$ as defined in \cite{migliorini1971some,gutan1997semigroups,tolo1969factorizable}.

We may now describe the relaxed action induced by a weakly Schreier extension. Let $\normalext{N}{k}{G}{e}{H}$ be a weakly Schreier extension of monoids. We may define an $\mathcal{N}_G(N)$-indexed equivalence relation $E$ on $N$ via $n_1 \sim^g n_2$ if and only if $k(n_1)g = k(n_2)g$. It may be verified easily that this is an $\mathcal{N}_G(N)$-relaxation of $N$.

Now if $g \in \mathcal{N}_G(N)$ then $g \cdot k(n) \in gN \subseteq Ng$. Hence, there exists an element $\alpha(g,n) \in N$ such that $k\alpha(g,n)g = g \cdot k(n)$. Let us verify that this is a compatible action.

First we suppose that $n_1 \sim^g n_2$ and consider the following.
\begin{align*}
k(n_1)k\alpha(g,n)g 
&= k(n_1) \cdot g \cdot k(n) \\ 
&= k(n_2) \cdot g \cdot k(n) \\
&= k(n_2)k\alpha(g,n)g
\end{align*}
This allows us to conclude that $n_1 \sim^g n_2$ implies $n_1\alpha(g,n) \sim^g n_2\alpha(g,n)$.

Next we suppose that $n_1 \sim^{g_2} n_2$ and consider the following.
\begin{align*}
k\alpha(g_1,n_1)g_1g_2 
&= g_1 \cdot k(n_1)g_2 \\ 
&= g_1 \cdot k(n_2)g_2 \\ 
&= k\alpha(g_1,n_2)g_1g_2
\end{align*}
Thus we may conclude that $n_1 \sim^{g_2} n_2$ implies $\alpha(g_1,n_1) \sim^{g_1g_2} \alpha(g_1,n_2)$.

Next we consider the following calculation.
\begin{align*}
k\alpha(g,n_1n_2)g 
&= g \cdot k(n_1)k(n_2) \\ 
&= k\alpha(g,n_1) \cdot g \cdot k(n_2) \\ 
&= k\alpha(g,n_1)k\alpha(g,n_2)g
\end{align*}
This then gives that $\alpha(g,n_1n_2) \sim^g \alpha(g,n_1)\alpha(g,n_2)$.

The fourth condition is given by the following calculation.
\begin{align*}
k\alpha(g_1g_2,n)g_1g_2 
&= g_1g_2 \cdot k(n) \\ 
&= g_1 \cdot k\alpha(g_2,n)g_2 \\ 
&= k\alpha(g_1,\alpha(g_2,n))g_1g_2
\end{align*}
This of course allows us to conclude that $\alpha(g_1g_2,n) \sim^{g_1g_2} \alpha(g_1,\alpha(g_2,n))$.

The final two conditions are easily deduced. We have $k\alpha(g,1)g = g \cdot k(1) = g$ and $k\alpha(1,n) \cdot 1 = k(n)$ which imply that $\alpha(g,1) \sim^g 1$ and $\alpha(1,n) \sim^1 n$ respectively.

Thus, $(E,[\alpha])$ is a relaxed action of $\mathcal{N}_G(N)$ on $N$. We would like to now compose with a set-theoretic splitting $s$ of $e$ which selects generators. We must verify that the generators $u_h$ lie in the normaliser. This is easily seen to be the case as $u_hk(n) \in e\inv(h)$ and hence by definition there exists an $x \in N$ such that $u_hk(n) = k(x)u_h$.

Now suppose $s$ splits $e$ and each $s(h)$ is a generator. Since $s(h_1)s(h_2) \in e\inv(h_1h_2)$ we know that there exists an element $\chi(h_1,h_2) \in N$ such that $k\chi(h_1,h_2)s(h_1h_2) = s(h_1)s(h_2)$. We may assume that $\chi(h,1) = 1 = \chi (1,h)$. We can then `compose' the relaxed action $(E,[\alpha])$ with $s$ to yield $(E_s,\alpha_s)$. Where $n_1 \sim^h n_2$ if and only if $k(n_1)s(h) = k(n_2)s(h)$ and $\alpha_s(h,n) = \alpha(s(h),n)$.

The pair $(E_s,\alpha_s)$ is no longer a relaxed action as all identities involving $\sim^{h_1h_2}$ do not in general hold anymore. However, modified versions of these identities involving $\chi$ do hold.

First, if we have that $n_1 \sim^{h_1} n_2$ then we may consider the following calculation.
\begin{align*}
k(n_1)k\chi(h_1,h_2)s(h_1h_2) 
&= k(n_1)s(h_1)s(h_2) \\
&= k(n_2)s(h_1)s(h_2) \\
&= k(n_2)k\chi(h_1,h_2)s(h_1h_2)
\end{align*}
Hence we get that $n_1 \sim^{h_1} n_2$ implies $n_1\chi(h_1,h_2) \sim^{h_1h_2} n_2\chi(h_1,h_2)$.

Next we have that if $n_1 \sim^{h_2} n_2$ the following holds true.
\begin{align*}
k\alpha_s(h_1,n_1)k\chi(h_1,h_2)s(h_1h_2) 
&= k\alpha_s(h,n_1)s(h_1)s(h_2) \\
&= s(h_1)k(n_1)s(h_2) \\
&= s(h_1)k(n_2)s(h_2) \\
&= k\alpha_s(h_1,n_2)k\chi(h_1,h_2)s(h_1h_2)
\end{align*}
Thus we have that $\alpha(h_1,n_1)\chi(h_1,h_2) \sim^{h_1h_2} \alpha(h_1,n_2)\chi(h_1,h_2)$.

Finally we have the following.
\begin{align*}
k\chi(h_1,h_2)k\alpha_s(h_1h_2,n)s(h_1h_2) 
&= k\chi(h_1,h_2)s(h_1h_2)k(n) \\ 
&= s(h_1)s(h_2)k(n) \\
&= s(h_1)k\alpha_s(h_2,n)s(h_2) \\
&= k\alpha_s(h_1,\alpha_s(h_2,n))s(h_1)s(h_2) \\
&= k\alpha_s(h_1,\alpha_s(h_2,n))k\chi(h_1,h_2)s(h_1h_2)
\end{align*}
Hence we have that $\chi(h_1,h_2)\alpha_s(h_1h_2,n) \sim^{h_1h_2} \alpha_s(h_1,\alpha_s(h_2,n))\chi(h_1,h_2)$.

We summarise the above in the following definition.

\begin{definition}\label{deffac}
We call $(E_s,\alpha_s,\chi)$ a factor system when it satisfies the following properties.
\begin{enumerate}
    \item $n_1 \sim^ 1 n_2$ implies $n_1 = n_2$,
    \item $n_1 \sim^h n_2$ implies $xn_1 \sim^h xn_2$,
    \item $n_1 \sim^{h_1} n_2$ implies $n_1\chi(h_1,h_2) \sim^{h_1h_2} n_2\chi(h_1,h_2)$,
    \item $n_1 \sim^h n_2$ implies $n_1\alpha(h,n) \sim^h n_2\alpha(h,n)$ for all $n \in N$,
    \item $n_1 \sim^{h_2} n_2$ implies $\alpha(h_1,n_1)\chi(h_1,h_2) \sim^{h_1h_2} \alpha(h_1,n_2)\chi(h_1,h_2)$ for all $h_1 \in H$,
    \item $\alpha(h,n_1n_2) \sim^h \alpha(h,n_1)\cdot\alpha(h,n_2)$,
    \item $\chi(h_1,h_2)\alpha(h_1h_2,n) \sim^{h_1h_2} \alpha(h_1,\alpha(h_2,n))\chi(h_1,h_2)$,
    \item $\alpha(h,1) \sim^h 1$,
    \item $\alpha(1,n) \sim^1 n$,
    \item $\chi(1,h) \sim^h 1 \sim^h \chi(h,1)$,
    \item $\chi(x,y)\chi(xy,z) \sim^{xyz} \alpha(x,\chi(y,z))\chi(x,yz)$.
\end{enumerate}
\end{definition}

As is to be expected, a crossed product may be defined from the above data and the resulting extension will be isomorphic to the original. To prove this we introduce the following notation:

\begin{enumerate}
    \item $x \ast ([n],h) = ([xn],h)$,
    \item $([n],h) \ast y = ([n\chi(h,y)],hy)$.
\end{enumerate}

The first operation is well-defined via condition (2) above and the second operation via condition 3. 

Now we consider $N \rtimes_{E,\alpha}^\chi H$ with underlying set $\bigsqcup_{h \in H} N/{\sim^h}$ and multiplication given by 
\[
([n_1],h_1)([n_2],h_2) = ([n_1\alpha(h_1,n_2)\chi(h_1,h_2)],h_1h_2).
\]
The unit is $([1],1)$.

\begin{proposition}\label{propfac}
If $(E,\alpha,\chi)$ is a factor system, then $N \rtimes_{E,\alpha}^\chi H$ is a monoid.
\end{proposition}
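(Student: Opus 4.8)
The plan is to verify, one at a time, the three assertions hidden in the word ``monoid'': that the multiplication on $\bigsqcup_{h\in H}N/{\sim^h}$ is well defined, that $([1],1)$ is a two‑sided unit, and that the multiplication is associative. Throughout, the organising observation is that the axioms of \cref{deffac} are tuned so that right‑multiplying a $\sim^{h}$‑relation by $\chi(h,y)$ upgrades it to a $\sim^{hy}$‑relation (this is condition (3)); consequently one never needs a bare monotonicity statement of the form ``$\sim^{h_1}$ implies $\sim^{h_1h_2}$'' — every change of superscript in the argument is produced by appending a factor of $\chi$.

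For well‑definedness, suppose $n_1\sim^{h_1}n_1'$ and $n_2\sim^{h_2}n_2'$; I would show $n_1\alpha(h_1,n_2)\chi(h_1,h_2)\sim^{h_1h_2}n_1'\alpha(h_1,n_2')\chi(h_1,h_2)$ by varying one argument at a time and then using transitivity. Varying the left argument, condition (4) gives $n_1\alpha(h_1,n_2)\sim^{h_1}n_1'\alpha(h_1,n_2)$, and condition (3) appends $\chi(h_1,h_2)$ to land at $\sim^{h_1h_2}$. Varying the right argument, condition (5) gives $\alpha(h_1,n_2)\chi(h_1,h_2)\sim^{h_1h_2}\alpha(h_1,n_2')\chi(h_1,h_2)$, and condition (2) left‑multiplies by $n_1$. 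For the unit laws, $([1],1)([n],h)=([\alpha(1,n)\chi(1,h)],h)$, and $\alpha(1,n)\chi(1,h)\sim^{h}n\chi(1,h)$ by conditions (9) and (3), while $n\chi(1,h)\sim^{h}n$ by conditions (10) and (2), so the left unit law follows by transitivity; the right unit law $([n],h)([1],1)=([n],h)$ is symmetric, using conditions (8), (2), (3), (10). (The $H$‑components behave correctly because $1$ is a unit in $H$.)

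The real work, and the main obstacle, is associativity. Expanding both sides, $\bigl(([n_1],h_1)([n_2],h_2)\bigr)([n_3],h_3)$ is represented by $n_1\,\alpha(h_1,n_2)\chi(h_1,h_2)\,\alpha(h_1h_2,n_3)\chi(h_1h_2,h_3)$ in $N/{\sim^{h_1h_2h_3}}$, call it $A$, while $([n_1],h_1)\bigl(([n_2],h_2)([n_3],h_3)\bigr)$ is represented by $n_1\,\alpha\bigl(h_1,\,n_2\alpha(h_2,n_3)\chi(h_2,h_3)\bigr)\chi(h_1,h_2h_3)$, call it $B$; the $H$‑components both equal $h_1h_2h_3$ by associativity in $H$. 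By condition (2) it suffices to show $A\sim^{h_1h_2h_3}B$ after deleting the common leading $n_1$. Starting from $B$: conditions (6) (applied twice) and (4) rewrite $\alpha\bigl(h_1,n_2\alpha(h_2,n_3)\chi(h_2,h_3)\bigr)$ as $\alpha(h_1,n_2)\,\alpha(h_1,\alpha(h_2,n_3))\,\alpha(h_1,\chi(h_2,h_3))$ up to $\sim^{h_1}$; condition (3) (with the pair $h_1,\,h_2h_3$) appends $\chi(h_1,h_2h_3)$, moving to $\sim^{h_1h_2h_3}$; condition (11) replaces the tail $\alpha(h_1,\chi(h_2,h_3))\chi(h_1,h_2h_3)$ by $\chi(h_1,h_2)\chi(h_1h_2,h_3)$, and condition (2) reinstates the prefix; finally condition (7) gives $\alpha(h_1,\alpha(h_2,n_3))\chi(h_1,h_2)\sim^{h_1h_2}\chi(h_1,h_2)\alpha(h_1h_2,n_3)$, condition (3) (with the pair $h_1h_2,\,h_3$) appends $\chi(h_1h_2,h_3)$ to upgrade this to $\sim^{h_1h_2h_3}$, and condition (2) left‑multiplies by $\alpha(h_1,n_2)$ — what remains is exactly $A$. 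Chaining these $\sim^{h_1h_2h_3}$‑relations by transitivity and restoring $n_1$ by condition (2) completes the proof. The only delicate point is the bookkeeping: each invocation of condition (3) (or (5), (7)) must be made with the correct pair of $H$‑elements so that the superscripts multiply to $h_1h_2h_3$, and the expansion of $\alpha(h_1,-)$ over a three‑fold product must be handled by condition (6) twice together with condition (4).
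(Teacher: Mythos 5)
Your proof is correct and follows essentially the same route as the paper's: the same one-variable-at-a-time argument for well-definedness (conditions (4)+(3) on the left slot, (5)+(2) on the right), and the same chain of rewrites for associativity via (6), (11), and (7), with every change of superscript routed through a $\chi$-append exactly as the axioms require. The only differences are cosmetic — the paper packages the ``prefix by $x$'' and ``append $\chi(h,y)$'' moves into an auxiliary $\ast$ notation rather than writing the relations out longhand, and it leaves the unit laws to the reader, which you verify explicitly (and correctly).
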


\begin{proof}
We first prove that the multiplication is well-defined. Suppose that $([n_1],h_1) = ([n_2],h_1)$ and $([x_1],h_2) = ([x_2],h_2)$. Now simply consider the following calculation.

\begin{align*}
    ([n_1],h_1)([x_1],h_2) 
    &= ([n_1\alpha(h_1,x_1)\chi(h_1,h_2)],h_1h_2) \\
    &= n_1 \ast ([\alpha(h_1,x_1)\chi(h_1,h_2)],h_1h_2) \\
    &= n_1 \ast ([\alpha(h_1,x_2)\chi(h_1,h_2)],h_1h_2) \\
    &= ([n_1\alpha(h_1,x_2)],h_1) \ast h_2 \\
    &= ([n_2\alpha(h_1,x_2)],h_1) \ast h_2 \\
    &= ([n_2\alpha(h_1,x_2)\chi(h_1,h_2)],h_1h_2) \\
    &= ([n_2],h_1)([x_2],h_2)
\end{align*}

It is clear that $([1],1)$ is the identity and so we must just show that the multiplication is associative. Consider the following calculation.

\begin{align*}
    (([n_1],h_1)([n_2],h_2))([n_3],h_3) 
    &= ([n_1\alpha(h_1,n_2)\chi(h_1,h_2)],h_1h_2)([n_3],h_3) \\
    &= ([n_1\alpha(h_1,n_2)\chi(h_1,h_2)\alpha(h_1h_2,n_3)\chi(h_1h_2,h_3)],h_1h_2h_3) \\
    &= n_1\alpha(h_1,n_2) \ast ([\chi(h_1,h_2)\alpha(h_1h_2,n_3)],h_1h_2) \ast h_3 \\
    &= n_1\alpha(h_1,n_2) \ast ([\alpha(h_1,\alpha(h_2,n_3))\chi(h_1,h_2)],h_1h_2) \ast h_3 \\
    &= n_1\alpha(h_1,n_2)\alpha(h_1,\alpha(h_2,n_3)) \ast ([\chi(h_1,h_2)\chi(h_1h_2,h_3)],h_1h_2h_3) \\
    &= n_1\alpha(h_1,n_2)\alpha(h_1,\alpha(h_2,n_3)) \ast ([\alpha(h_1,\chi(h_2,h_3))\chi(h_1,h_2h_3)],h_1h_2h_3)
\end{align*}

We may then compare this with the following.

\begin{align*}
    ([n_1],h_1)(([n_2,h_2)([n_3],h_3)) 
    &= ([n_1],h_1)([n_2\alpha(h_2,n_3)\chi(h_2,h_3)],h_2h_3) \\
    &= ([n_1\alpha(h_1,n_2\alpha(h_2,n_3)\chi(h_2,h_3))\chi(h_1,h_2h_3),h_1h_2h_3) \\
    &= n_1 \ast ([\alpha(h_1,n_2\alpha(h_2,n_3)\chi(h_2,h_3))],h_1) \ast h_2h_3 \\
    &= n_1 \ast ([\alpha(h_1,n_2)\alpha(h_1,\alpha(h_2,n_3))\alpha(h_1,\chi(h_2,h_3)))],h_1) \ast h_2h_3 \\
    &= n_1\alpha(h_1,n_2)\alpha(h_1,\alpha(h_2,n_3)) \ast ([\alpha(h_1,\chi(h_2,h_3))\chi(h_1,h_2h_3)],h_1h_2h_3)
\end{align*}

It is clear that these two expressions are equal and so indeed this is a monoid as desired.
\end{proof}

If we start with a weakly Schreier extension $\normalext{N}{k}{G}{e}{H}$ and extract an associated factor system $(E,\alpha,\chi)$ corresponding to the splitting $s$. Then the map $f \colon N \rtimes_{E,\alpha}^\chi H \to G$, sending $([n],h)$ to $k(n)s(h)$ is an isomorphism.

By construction it is a bijection of sets and so we must simply verify that it is a homomorphism. It is clear that $f([1],1) = 1$ and so we consider only the following calculation.

\begin{align*}
    f([n_1],h_1)f([n_2],h_2) 
    &= k(n_1)s(h_1)k(n_2)s(h_2) \\
    &= k(n_1)k\alpha(h_1,n_2)s(h_1)s(h_2) \\
    &= k(n_1)k\alpha(h_1,n_2)k\chi(h_1,h_2)s(h_1h_2) \\
    &= f([n_1\alpha(h_1,n_2)\chi(h_1,h_2)],h_1h_2) \\
    &= f(([n_1],h_1)([n_2],h_2))
\end{align*}

Moreover, we may construct the extension $\normalext{N}{k}{N \rtimes_{E,\alpha}^\chi}{e'}{H}$ with $k(n) = ([n],1)$ and $e([n],h) = h$. It is not hard to see that $f$ is then an isomorphism of extensions.

All that remains is some condition to identify when two factor systems give isomorphic extensions. Suppose that in the following, $f$ is an isomorphism of extensions.

\begin{center}
  \begin{tikzpicture}[node distance=2.0cm, auto]
    \node (A) {$N$};
    \node (B) [right=1.2cm of A] {$N \rtimes_{E,\alpha}^\chi H$};
    \node (C) [right=1.2cm of B] {$H$};
    \node (D) [below of=A] {$N$};
    \node (E) [below of=B] {$N \rtimes_{E',\alpha'}^{\chi'} H$};
    \node (F) [below of=C] {$H$};
    \draw[normalTail->] (A) to node {$k$} (B);
    \draw[-normalHead] (B) to node {$e$} (C);
    \draw[normalTail->] (D) to node {$k'$} (E);
    \draw[-normalHead] (E) to node {$e'$} (F);
    \draw[->] (B) to node {$f$} (E);
    \draw[double equal sign distance] (A) to (D);
    \draw[double equal sign distance] (C) to (F);
  \end{tikzpicture}
\end{center}

Just as in the group case we have $f([n],h) = f([n],1)f([1],h) = ([n],1)f([1],h)$. Since $f$ must preserve the second component we may write $f([1],h) = ([\gamma(h)],h)$ for some set-theoretic function $\gamma \colon H \to N$ preserving 1. Then we have that $f([n],h) = ([n\gamma(h)],h)$.

We would then like to determine which functions $\gamma \colon H \to N$ may be used in this way to construct an isomorphism between two crossed products. The result will be a homomorphism if whenever $n_1 \sim^{h_1h_2} n_2$ we have
\[\gamma(h_1h_2)n_1\alpha'(h_1,n_2)\chi'(h_1,h_2) \sim^{h_1h_2} \gamma(h_1)n_1\alpha(h_1,\gamma(h_2)n_2)\chi(h_1,h_2).
\]
However, as was shown in \cite{faul2021quotients} morphisms of extensions are not guaranteed to be isomorphisms. 

If we construct $f$ from $\gamma$ and the result has an inverse $f\inv$ then there is necessarily some $\lambda \colon H \to N$ such that $([\gamma(h)\lambda(h)],h) = ([1],h)$ and $([\lambda(h)\gamma(h)],1)$. Hence $\gamma(h)$ must be right invertible relative to $h$ in $E$ and left invertible relative to $h$ in $E'$. Conversely it is not hard to see that these conditions give that $f$ is injective and surjective respectively and hence an isomorphism.
This suffices to characterise all such $\gamma$. Hence we now have a complete characterisation of weakly Schreier extensions.

We summarise these results in the following theorem.

\begin{theorem}
The crossed product construction (described in \cref{propfac}) gives a bijective correspondence between weakly Schreier extensions and the collection of factor systems (as in \cref{deffac}) modulo the equivalence relation given by $(E,\alpha,\chi) \sim (E',\alpha',\chi')$ whenever there exists a map $\gamma$ that is right invertible relative to $h$ in $E$ and left invertible relative to $h$ in $H'$ satisfying that $n_1 \sim^{h_1h_2} n_2$ implies
\[\gamma(h_1h_2)n_1\alpha'(h_1,n_2)\chi'(h_1,h_2) \sim^{h_1h_2} \gamma(h_1)n_1\alpha(h_1,\gamma(h_2)n_2)\chi(h_1,h_2).
\]
\end{theorem}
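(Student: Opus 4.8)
The proof assembles the constructions and computations developed above; the plan is to check that the two assignments — a weakly Schreier extension to a factor system via a splitting, and a factor system to its crossed product — are well defined on the relevant equivalence classes and are mutually inverse.

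I would first treat the passage from extensions to factor systems. Given a weakly Schreier extension $\normalext{N}{k}{G}{e}{H}$, the map $e$ is surjective (being a cokernel), each fibre contains a generator, and $1$ generates $e\inv(1) = k(N)$, so there is a unit-preserving set-theoretic section $s$ with every $s(h)$ a generator; the triple $(E_s,\alpha_s,\chi)$ attached to $s$ is a factor system by the computations preceding \cref{deffac}. To see that its equivalence class is well defined I would check two things. First, if $s'$ is another such section then, since $s(h)$ and $s'(h)$ both generate $e\inv(h)$, there is $\gamma(h) \in N$ with $s(h) = k\gamma(h)s'(h)$, and symmetrically some $\lambda(h)$ with $s'(h) = k\lambda(h)s(h)$, so $\gamma$ is right invertible relative to $h$ in $E_s$ and left invertible relative to $h$ in $E_{s'}$; substituting $s(h) = k\gamma(h)s'(h)$ into the defining equations $s(h_1)s(h_2) = k\chi(h_1,h_2)s(h_1h_2)$ and $s(h)k(n) = k\alpha_s(h,n)s(h)$ yields exactly the compatibility relation in the statement, so $(E_s,\alpha_s,\chi) \sim (E_{s'},\alpha_{s'},\chi')$. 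Second, an isomorphism of extensions carries a section selecting generators to one of the same kind, so isomorphic extensions produce equivalent factor systems.

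Next I would treat the passage from factor systems to extensions. Given a factor system $(E,\alpha,\chi)$, \cref{propfac} shows $N \rtimes_{E,\alpha}^\chi H$ is a monoid; one checks that $k(n) = ([n],1)$ is injective with image $e\inv(1)$ and is the kernel of the projection $e$, that $e$ is the cokernel of $k$, and that $([1],h)$ is a weak generator of the fibre over $h$ because $([n],1)([1],h) = ([n\chi(1,h)],h) = ([n],h)$, using $\chi(1,h) \sim^h 1$ together with condition (2); hence the crossed product is a weakly Schreier extension. For the round trips: starting from an extension and composing the two constructions yields the isomorphism $f([n],h) = k(n)s(h)$ verified above; starting from a factor system, forming the crossed product and then extracting a factor system along the section $h \mapsto ([1],h)$ returns $(E,\alpha,\chi)$ itself (up to the slack in the choice of representative of $[\alpha]$). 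Finally, well-definedness in the other direction and injectivity on equivalence classes are precisely the $\gamma$-analysis carried out above: any isomorphism of extensions between two crossed products has the form $f([n],h) = ([n\gamma(h)],h)$ for a unit-preserving $\gamma$, being a homomorphism forces the displayed relation, and being invertible forces $\gamma$ to be right invertible relative to $h$ in $E$ and left invertible relative to $h$ in $E'$; conversely any $\gamma$ with these properties defines a homomorphism which is a bijection, hence an isomorphism of extensions.

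The main obstacle is the bookkeeping in the two ``change of presentation'' steps — verifying that a different section produces an $\sim$-equivalent factor system (not merely one with the same underlying relaxation), and dually that the composite factor system $\to$ crossed product $\to$ factor system lands in the original equivalence class — since in each case one must exhibit the witnessing $\gamma$ and verify the single nontrivial compatibility identity together with the invertibility conditions. Everything else is a routine unwinding of definitions, and the key computation that $f$ is a homomorphism has already been done.
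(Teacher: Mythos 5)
Your proposal is correct and follows essentially the same route as the paper: extract a factor system from a generator-selecting section, build the crossed product via \cref{propfac}, verify the isomorphism $f([n],h)=k(n)s(h)$, and characterise isomorphisms of crossed products by maps $\gamma$ with the stated one-sided invertibility. The only difference is that you make explicit the well-definedness under change of section (exhibiting the witnessing $\gamma$ directly), which the paper leaves implicit by folding it into the $\gamma$-equivalence of the resulting factor systems.
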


We end by discussing related work which does not quite fit into this paradigm.

In \cite{fleischer1981monoid}, it is pointed out that Grillet's left coset extensions as well as the equivalent $\mathcal{H}$-coextensions of Leech are in general incomparable to weakly Schreier extensions.

In \cite{martins2020semi}, semi-biproducts of monoids are considered which are more general than weakly Schreier extensions, though which contain extra data in the form of set theoretic splittings for the kernel and cokernel. They are characterized in terms of pseudo actions, comprising three parts. The first two are a preaction and a correction system which together appear to play the role of a relaxed action, except that instead of considering equivalence classes, a representative is selected for each. The final part is a factor system. Further work is required to more clearly understand the connections between this work and the extensions described here.

\bibliographystyle{abbrv}
\bibliography{bibliography}

\end{document}